\newcommand{\Mm}{\mathbb{M}} 
\newcommand{\Cc}{\mathbb{C}}
\newcommand{\Rr}{\mathbb{R}}
\newcommand{\Ss}{\mathbb{S}}
\newcommand{\SU}{\mathfrak{su}}
\newcommand{\UU}{\mathfrak{u}}
\newcommand{\LA}{\mathcal{L}}
\newcommand{\GG}{\mathfrak{g}}
\newcommand{\stab}{\mbox{stab}}
\newcommand{\Tr}{\mbox{Tr}}
\theoremstyle{thmstyleone}%
\newtheorem{theorem}{Theorem}
\newtheorem{proposition}[theorem]{Proposition}%
\theoremstyle{thmstyletwo}%
\newtheorem{remark}{Remark}%
\theoremstyle{thmstylethree}%
\title{Solving cubic matrix equations arising in conservative dynamics}
\author{Michele Benzi\footnote{Scuola Normale Superiore, Piazza dei Cavalieri, 7, Pisa, 56126, Italy -  \textit{michele.benzi@sns.it}} \and Milo Viviani\footnote{CRM Ennio De Giorgi - Collegio Puteano, Scuola Normale Superiore, Piazza dei Cavalieri, 3, Pisa, 56126, Italy - \textit{milo.viviani@sns.it}}}
\date{}
\begin{document}


\maketitle

\begin{center}
\textit{Dedicated to Alfio Quarteroni on his 70th Birthday}
\end{center}


\abstract{In this paper we consider the spatial semi-discretization of conservative PDEs. 
Such finite dimensional approximations of infinite
dimensional dynamical systems can be described as flows in suitable matrix 
spaces, which in turn leads to the need to solve polynomial matrix equations, 
a classical and important topic both in theoretical and in applied mathematics.
Solving numerically these equations is challenging due to the presence of several 
conservation laws which our finite models incorporate and which must be retained while integrating the equations of motion. 
In the last thirty years, the theory of geometric integration has provided a variety of techniques to tackle this problem.
These numerical methods require solving both direct and inverse problems in matrix spaces.
We present three algorithms to solve a cubic matrix equation arising in the geometric integration of isospectral flows.
This type of ODEs includes finite models of ideal hydrodynamics, plasma dynamics, and spin particles, which we use as test problems for our algorithms.}

\textbf{Keywords:}{Cubic matrix equations, Lie--Poisson integrator, Euler equations, Plasma vortices, Spin systems}


\maketitle

\section{Introduction}
The numerical solution of polynomial matrix equations is a well studied and active field of research \cite{BinIanMei2012,GohLanRod2006,Sim2016}.
Its relevance clearly goes beyond pure mathematics and the development of efficient	algorithms is crucial in several areas of computational science.
Linear matrix equations have been studied since the 19th Century, beginning with Sylvester \cite{Syl1884}.
Linear matrix equations have a variety of different formulations, and according to the specific structure, different techniques can be used to solve them \cite{GohLanRod2006,Sim2016}.
One degree higher, we find quadratic matrix equations.
For such problems the theory is more intricate and various numerical issues may appear \cite{BinIanMei2012}.
Among the quadratic matrix equations, one of the most studied is the continuous-time algebraic Riccati equation (CARE) \cite{BinIanMei2012}, which will appear in Section~\ref{sec:Num_sch}.

In this paper, we study the numerical solution of the cubic matrix equation
\begin{equation}\label{eq:matrix_cubic}
(I - h\LA X)X(I + h \LA X) = Y,
\end{equation}
where $X$ is unknown and $Y$ is given in $\Mm(N,\Rr)$ or $\Mm(N,\Cc)$ (the spaces of $N\times N$ real or complex matrices), $\LA\neq 0 $ is a linear operator acting on matrices and $h\geq 0$.
Equation \eqref{eq:matrix_cubic} appears in the geometric integration of matrix flows of the form:
\begin{equation}\label{eq:isospectral}
\begin{array}{ll}
\dot{Y} &= [\LA Y,Y],\\
Y(0)&=Y_0,
\end{array}
\end{equation}
where $Y=Y(t)$ is a curve in some subspace of the space $\Mm(N,\Rr)$ or $\Mm(N,\Cc)$ and the square brackets denote the commutator of two matrices: $[A,B] = AB - BA$.
The flow of \eqref{eq:isospectral} is isospectral, which means that the eigenvalues of $Y(t)$ do not depend on $t$.
Furthermore, when $\LA $ is self-adjoint with respect to the pairing $\langle A,B\rangle=\Tr(AB)$, \eqref{eq:isospectral} is Hamiltonian (another term is ``Lie-Poisson"), with Hamiltonian function given by 
\begin{equation}\label{eq:hamiltonian}
H(Y)=\frac{1}{2} \Tr(Y\LA Y).
\end{equation}
A discrete approximation of the solution of \eqref{eq:isospectral} is determined, for $h>0$ sufficiently small, by the implicit-explicit iteration defined in \cite{Viv2019} as:
\begin{equation}\label{eq:LiePoisson_int}
\begin{array}{ll}
(I - h\LA X_n)X_n(I + h \LA X_n) &:= Y_n\\
(I + h\LA X_n)X_n(I - h \LA X_n) &=: Y_{n+1}.
\end{array}
\end{equation}
In this scheme, $Y_n$ denotes the approximate solution at time $t_n$.
This scheme preserves the spectrum of $Y_0$ and nearly conserves the Hamiltonian \eqref{eq:hamiltonian}, indeed it is a Lie--Poisson integrator (see \cite{Viv2019}).
We observe that whenever $Y$ belongs to a quadratic matrix Lie algebra 
\[\GG_J=\lbrace Y\in\Mm(N,\Cc)\mid YJ+JY^*=0\rbrace
\]
for some fixed $J\in\Mm(N,\Cc)$, and $\LA :\GG_J\rightarrow\GG_J$, equation \eqref{eq:matrix_cubic} admits solutions in $\GG_J$.
Indeed, the left-hand side of equation \eqref{eq:matrix_cubic} is the differential of the inverse of the Cayley transform, which is know to preserve quadratic Lie algebras \cite{hlw}.
Of particular interest for applications to PDEs is the case of $J=I$, for which $\GG_J=\UU(N)$, the Lie algebra of the skew-Hermitian matrices.
In section~\ref{sec:Num_exp}, we consider the Lie algebra $\SU(N)$, which consists of skew-Hermitian matrices with zero trace.
\begin{remark}
It is not hard to check that the following equalities hold:
\begin{equation}\label{eq:LiePoisson_int_approx}
\begin{array}{ll}
Y_{n+1} &= Y_n + h\left[\LA \left(\dfrac{Y_{n+1}+Y_n}{2}\right),\dfrac{Y_{n+1}+Y_n}{2}\right] + \mathcal{O}(h^2)\\
X_{n+1} &= X_n + \dfrac{h}{2}([\LA X_{n+1},X_{n+1}] + [\LA X_n,X_n]) + \mathcal{O}(h^2).
\end{array}
\end{equation}
Hence, up to a term of order $\mathcal{O}(h^2)$, $Y_n$ evolves via the midpoint scheme, whereas $X_n$ via the trapezoidal scheme.
It is known that the midpoint and the trapezoidal method are \textit{conjugate symplectic} \cite{hlw}.
Hence, we have that $X_n$ evolves accordingly to a scheme $\phi_h^T:X_n\mapsto X_{n+1}$ which is \textit{conjugate isospectral} to the scheme $\phi_h^M:Y_n\mapsto Y_{n+1}$ as defined in \eqref{eq:LiePoisson_int}, i.e. there exists an invertible map $\chi_h$ such that:
\[
\phi_h^T=\chi_h^{-1}\circ\phi_h^M\circ\chi_h.
\]
The map is clearly given by the left-hand side of the first equation in  \eqref{eq:LiePoisson_int}, which we denote as $\chi_h=\phi_{h/2}^{EE}$.
We observe that $\chi_h=I + \mathcal{O}(h)$.
Hence, if $Y_n$ evolves on a compact set, then $X_n$ evolves on a compact set for any $n\geq 0$ \cite[VI.8]{hlw}.
We illustrate this relationship in the figure~\ref{fig:comm_diag} below. 

\begin{figure}[h!]\label{fig:comm_diag}
\includegraphics[scale=.3]{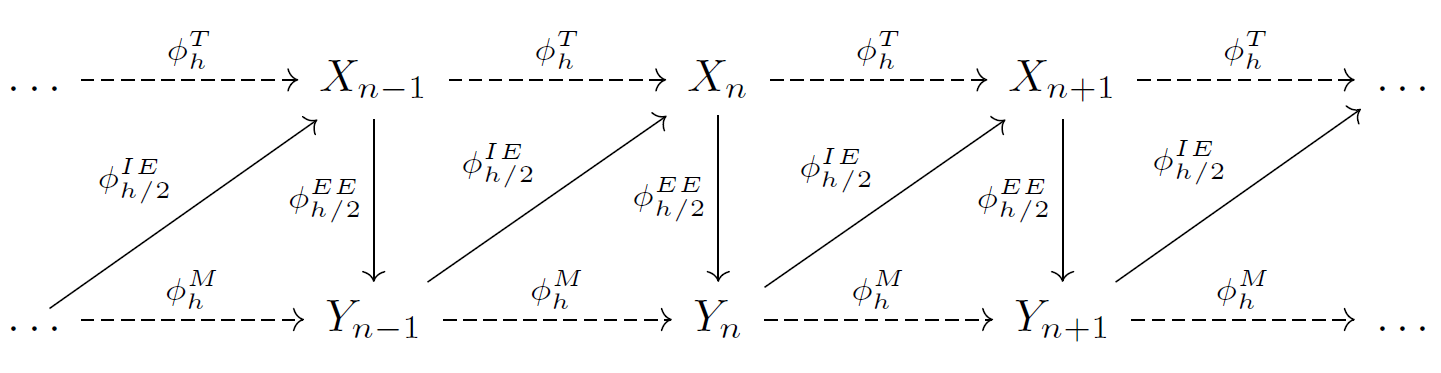}
\caption{Illustration of the schemes \eqref{eq:LiePoisson_int}, \eqref{eq:LiePoisson_int_approx}. Here $\phi_h^M$ denotes the map in the first line of \eqref{eq:LiePoisson_int_approx}, $\phi_h^T$ the map in the second line of \eqref{eq:LiePoisson_int_approx}, $\phi_{h/2}^{IE}$ the map in the first line of \eqref{eq:LiePoisson_int} and , $\phi_{h/2}^{EE}$ the map in the second line of \eqref{eq:LiePoisson_int}. Those correspond up to $\mathcal{O}(h^2)$ terms respectively to the implicit midpoint, trapezoidal, implicit Euler, explicit Euler schemes.}
\end{figure}
\end{remark}

The need for an efficient solver for \eqref{eq:matrix_cubic} can be understood by the fact that conservative PDEs, like the vorticity equation of fluid dynamics \cite{Zei1991,Zei2004} or the drift-Alfv\'en model for a quasineutral plasma \cite{MenBerKamSch2012}, admit a spatial discretization in $\SU(N)$, for $N=1,2,\ldots$.
The crucial aspect of these finite-dimensional models is that they retain the conservation laws of the original equations.
In order to retain these features, the resulting semi-discretized equations can be integrated in time using the scheme \eqref{eq:LiePoisson_int}.
Clearly, to get good spatial accuracy, $N$ has to be quite large (at least $10^3$).
Moreover, the need for an efficient solver for \eqref{eq:matrix_cubic} can be necessary for spin systems with many interacting particles.
In this case, the equations \eqref{eq:isospectral} are posed in the product Lie algebra $\SU(2)^N$, where $N$ is the number of particles.
We stress that in a typical simulation, hundreds of these cubic matrix equations have to be solved to high accuracy.
This paper is devoted to devise an efficient way to solve \eqref{eq:matrix_cubic}.
First we prove existence and uniqueness of the solution for \eqref{eq:matrix_cubic}, for $h$ sufficiently small.
Then, we propose and investigate three possible algorithms to solve equation \eqref{eq:matrix_cubic}, which intrinsically preserve the quadratic matrix Lie algebras. 
First, in section~\ref{sec:exp_fix} we consider an explicit fixed point iteration scheme, the convergence of  which follows from the existence and uniqueness result.
Then, in section~\ref{sec:lin_sch}, we consider a linear scheme again based on a fixed point iteration which requires the solution of a linear matrix equation.
Again the existence and uniqueness result guarantees the convergence of the scheme.
We will see in section~\ref{sec:cubic_scheme} that a suitable inexact Newton method applied to \eqref{eq:matrix_cubic} is also convergent, at least locally.
We will see in section~\ref{sec:quad_scheme} that the third scheme, based on the Riccati equation, is not practical, due to the non uniqueness of the solution.
In the last section, we show the results of numerical experiments aimed at assessing the efficiency of the different schemes for various linear operators $\LA $, which correspond to different physical models.

We mention in passing that a different cubic matrix equation has been studied in \cite{BaMeNeVaD2020}.

\section{Existence and uniqueness}
In this section, we show the existence and uniqueness of the solution for the equation \eqref{eq:matrix_cubic}, when the time-step $h$ is sufficiently small.
\begin{theorem}\label{thm:existence_uniqueness}
Given $Y\in \Mm(N,\Cc)$, equation \eqref{eq:matrix_cubic} has a unique solution for sufficiently small $h>0$ in some neighbourhood of $Y$.
Furthermore, when equation \eqref{eq:matrix_cubic} takes place in $\SU(N)$, the solution is unique in some neighbourhood of $Y\neq 0$ for any $h< \dfrac{1}{3\|\LA\|_{op}\|Y\|}.$
\end{theorem}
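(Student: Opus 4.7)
The plan is to recast \eqref{eq:matrix_cubic} as a fixed-point equation and apply the Banach contraction principle on a closed Frobenius-norm ball around $Y$. Expanding the triple product on the left-hand side gives
\[
(I - h\LA X)X(I + h\LA X) = X - h[\LA X, X] - h^2 (\LA X)\,X\,(\LA X),
\]
so \eqref{eq:matrix_cubic} is equivalent to the fixed-point equation $X = T(X)$ with
\[
T(X) := Y + h[\LA X, X] + h^2 (\LA X)\,X\,(\LA X).
\]

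For the first (local, qualitative) claim I fix $r>0$ and work on $\overline{B_r(Y)} \subset \Mm(N,\Cc)$. Submultiplicativity of the Frobenius norm gives the self-map estimate
\[
\|T(X)-Y\| \leq 2h\|\LA\|_{op}(\|Y\|+r)^2 + h^2\|\LA\|_{op}^2(\|Y\|+r)^3,
\]
which is $\leq r$ for $h$ small. Using the telescoping identity
\[
[\LA X_1, X_1]-[\LA X_2, X_2] = [\LA(X_1-X_2), X_1] + [\LA X_2, X_1-X_2]
\]
and an analogous three-term decomposition of the cubic increment, one obtains the Lipschitz bound
\[
\|T(X_1)-T(X_2)\| \leq \bigl(4h\|\LA\|_{op}(\|Y\|+r) + 3h^2\|\LA\|_{op}^2(\|Y\|+r)^2\bigr)\|X_1-X_2\|,
\]
which is strictly less than $1$ for $h$ small. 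Banach then produces a unique fixed point in $\overline{B_r(Y)}$, establishing the first assertion.

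For the quantitative $\SU(N)$ statement I would first verify that $T$ maps $\SU(N)$ into itself: skew-Hermiticity follows from $\LA(\SU(N)) \subseteq \SU(N)$ together with $[A,X],\ AXA$ being skew-Hermitian for $A,X \in \SU(N)$, while tracelessness of the cubic residual $(\LA X)X(\LA X)$ is the subtle point and is inherited from the quadratic-Lie-algebra interpretation of the left-hand side of \eqref{eq:matrix_cubic} as the differential of the inverse Cayley transform, recalled in the introduction. I would then sharpen the previous estimates by exploiting structural features of $\SU(N)$: normality of skew-Hermitian matrices (so $\|A\|_2 \leq \|A\|_F$ and $\|ABC\|_F \leq \|A\|_2\|B\|_F\|C\|_2$ is tight) and a B\"ottcher--Wenzel-type commutator inequality $\|[A,B]\|_F \leq \sqrt{2}\|A\|_F\|B\|_F$, combined with the a priori bound $\|X\|=\|Y\|+\mathcal{O}(h)$ on any solution close to $Y$. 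Balancing these with an optimal radius $r$ reduces the contraction condition to exactly $h < \frac{1}{3\|\LA\|_{op}\|Y\|}$, from which uniqueness in a neighbourhood of $Y\neq 0$ follows.

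The main obstacle is precisely the derivation of this sharp constant: the generic Frobenius-norm estimate used in the first part delivers only the weaker leading factor $4h\|\LA\|_{op}\|Y\|$, so getting the stated constant $3$ requires systematic structural use of $\SU(N)$—normality, the improved commutator inequality, and the observation that the cubic residual is itself skew-Hermitian, which permits trading one Frobenius factor for an operator-norm factor at a strategic point of the estimate—together with a careful optimisation of $r$ so that the self-map and contraction conditions are simultaneously nearly saturated.
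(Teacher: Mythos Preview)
Your treatment of the first (qualitative) assertion is essentially the paper's: both recast \eqref{eq:matrix_cubic} as the fixed-point problem $X=F_h(X)$ with $F_h(X)=Y+h[\LA X,X]+h^2(\LA X)X(\LA X)$ and invoke the Banach contraction principle on a closed ball about $Y$. (The paper additionally sets up an auxiliary ODE in $h$ to argue existence separately before establishing the contraction, but this is not essential to the conclusion.)

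For the quantitative $\SU(N)$ bound there is a genuine gap. The decisive step in the paper is an \emph{exact} a priori inequality: any solution $X\in\SU(N)$ of \eqref{eq:matrix_cubic} satisfies $\|X\|\leq\|Y\|$. This comes from the identity
\[
\|Y\|^2=\Tr\bigl(X(I-(h\LA X)^2)(I-(h\LA X)^2)X^*\bigr),
\]
together with the observation that $\LA X$ skew-Hermitian forces $-(h\LA X)^2\geq 0$, hence $(I-(h\LA X)^2)^2\geq I$. Substituting $\|X\|\leq\|Y\|$ (and likewise for the second argument) into the Lipschitz estimate for $F_h$ then collapses the contraction condition to the quadratic inequality $2t+3t^2<1$ in $t=h\|\LA\|_{op}\|Y\|$, whose positive root is exactly $1/3$.

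Your route does not reach this. The asymptotic statement $\|X\|=\|Y\|+\mathcal{O}(h)$ that you invoke is strictly weaker than $\|X\|\leq\|Y\|$: the $\mathcal{O}(h)$ correction, if positive, feeds back into the Lipschitz constant and prevents a clean, $h$-uniform threshold. The structural refinements you list (B\"ottcher--Wenzel giving $\sqrt{2}$ instead of $2$ in the commutator bound, normality, swapping a Frobenius factor for an operator-norm factor) are legitimate but do not by themselves produce the constant $3$: even granting $\|X\|,\|Z\|\leq\|Y\|$ and the $\sqrt{2}$ commutator bound, the resulting inequality $2\sqrt{2}\,t+3t^2<1$ gives a threshold near $0.27$, not $1/3$. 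The missing ingredient is precisely the positivity argument yielding $\|X\|\leq\|Y\|$; your ``careful optimisation of $r$'' cannot manufacture it.
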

\begin{proof}
We rewrite equation \eqref{eq:matrix_cubic} as a fixed point problem $F_h(X)=X$, for 
\begin{equation}\label{eq:FX}
F_h(X) = Y + h[\LA X,X] + h^2(\LA X) X (\LA X).
\end{equation}
We first show the existence of a solution for the fixed point problem $F_h(X)=X$.
Let us introduce the following Cauchy problem:
\begin{equation}\label{eq:diff_eqn1}
\begin{array}{ll}
X'(h) &= G_h^{-1}(X(h))\left[\frac{\partial F_h}{\partial h}(X(h))\right]\\
X(0) &= Y,
\end{array}
\end{equation}
where
\[G_h(X) = I -\frac{\partial F_h}{\partial X},\]
which is invertible for $h$ sufficiently small, being $\|\frac{\partial F_h}{\partial X}\|_{op}=\mathcal{O}(h)$.
Hence, the Cauchy problem \eqref{eq:diff_eqn1} has solution $X(h)$ for $\|\frac{\partial F_h}{\partial X}\|_{op}<1$, being the right hand side continuous.
This ensures the existence of a solution for the fixed point problem $F_h(X)=X$, because of the equality
\[
\frac{d}{dh}[F_h(X(h))]=\frac{\partial F_h}{\partial h}(X(h))+\frac{\partial F_h}{\partial X}[X'(h)].
\]
In order to get uniqueness for the fixed point problem $F_h(X)=X$, we show that there exists a neighbourhood of a fixed point $X$ containing $Y$ in which $F_h$ is a contraction.
Let us calculate
\begin{equation}
\begin{array}{ll}
F_h(X) - F_h(Z) &= h([\LA X,X]-[\LA Z,Z]) + h^2((\LA X) X (\LA X)-(\LA Z) Z (\LA Z))\\
&=h([\LA( X-Z),X]+[\LA Z,X-Z]) +\\
& +h^2((\LA X) (X-Z) (\LA X)+(\LA (X-Z)) Z (\LA X)+(\LA Z) Z (\LA (X-Z))).\\
\end{array}
\end{equation}
Hence
\begin{equation}
\begin{array}{ll}
\|F_h(X) - F_h(Z)\| &\leq h\|\LA\|_{op}(\|X\|+\|Z\|)\|X-Z\|+\\
& +h^2\|\LA\|_{op}^2(\| X\|^2+\|X\|\|Z\|+\|Z\|^2)\|X-Z\|.
\end{array}
\end{equation}
Therefore, given a fixed point $X$ and $0<\varepsilon<1$, in the neighbourhood of $(0,X)$
\begin{equation}\label{eq:areaA}
\begin{array}{ll}
\mathfrak{U}_\varepsilon=(0,X)+&\lbrace (h,Z)\vert\hspace{.3cm} h\|\LA\|_{op}(\|X\|+\|X+Z\|)+ \\
&+h^2\|\LA\|_{op}^2(\| X\|^2+\|X\|\|X+Z\|+\|X+Z\|^2)<1-\varepsilon\rbrace,
\end{array}
\end{equation}
we find $\lbrace h\rbrace\times B_r(X)\subset\mathfrak{U}_\varepsilon$, with $h,r>0$ determined by \eqref{eq:areaA}, such that $F_h:B_r(X)\rightarrow B_r(X)$ is a contraction and in $\overline{B_r(X)}$ we can apply the Banach-Caccioppoli theorem and get a unique solution to the fixed point problem $F_h(X)=X$.
Taking $X+Z=Y$, we get the neighbourhood of $Y$ in which we have a unique solution for equation \eqref{eq:matrix_cubic}.
\\
Let us now assume that equation \eqref{eq:matrix_cubic} takes place in $\SU(N)$.
Then
\begin{equation}
\begin{array}{ll}
\|Y\|^2&=\|(I - h\LA X)X(I + h \LA X)\|^2\\
&=\Tr((I - h\LA X)X(I + h \LA X)((I - h\LA X)X(I + h \LA X))^*)\\
&=\Tr(X(I - (h\LA X)^2)(I - (h\LA X)^2)X^*).
\end{array}
\end{equation} 
The matrix $(I - (h\LA X)^2)(I - (h\LA X)^2)$ is symmetric positive definite for any $X$.
Indeed, the eigenvalues of $-(h\LA X)^2$ are real non negative, since $\LA X\in\SU(N)$.
Therefore, $\|X\|\leq\|Y\|$.
Hence, replacing $\|X\|,\|X+Z\|$ with $\|Y\|$ in \eqref{eq:areaA} and letting $\varepsilon\rightarrow 0$, we can find, for $Y\neq 0$, the following bound for $h$:
\[
h<\dfrac{1}{3\|\LA\|_{op}\|Y\|}.
\]
\end{proof}

\section{Numerical schemes}\label{sec:Num_sch}
In this section, we present three possible numerical schemes to solve \eqref{eq:matrix_cubic}.

\subsection{Explicit fixed point}\label{sec:exp_fix}
Theorem \ref{thm:existence_uniqueness} gives a first scheme to solve equation \eqref{eq:matrix_cubic}:
\begin{equation}\label{eq:scheme_1}
X_{k+1} := F_h(X_k) = Y+ h [\LA X_k,X_k] + h^2\LA X_kX_k\LA X_k
\end{equation}
for $k=1,2,\ldots$.
From theorem \ref{thm:existence_uniqueness} we have that $F_h$ is a contraction for $h$ sufficiently small. 
Hence, the fixed point iteration has a unique solution for $h$ small.
\
When we seek for solutions in $\SU(N)$, we can take any $h<\frac{1}{3\|\LA\|_{op}\|Y\|}$.
The resulting Algorithm 1 is given below. We observe that the  cost per iteration is ${\mathcal O}(N^3)$.
\begin{algorithm}
\caption{Linear scheme in $\SU(N)$}\label{alg:cap}
\begin{algorithmic}
\Require $Y\in\SU(N)$; $\LA:\SU(N)\rightarrow\SU(N)$
\State $tol\gets 10^{-10}$
\State $err \gets 1$
\State $X_0 \gets Y$
\While{$err > tol$}
    \State $X_1 \gets F_h(X_0)$
    \State $err \gets \|X_1-X_0\|$
    \State $X_0 \gets X_1$
\EndWhile
\end{algorithmic}
\end{algorithm}

\subsection{Linear scheme}\label{sec:lin_sch}
Equation \eqref{eq:matrix_cubic} can be decomposed into two coupled matrix equations.
This splitting induces the following scheme:
\begin{equation}\label{eq:scheme_1}
\begin{array}{ll}
&P_k := \LA X_k\\
&(I - hP_k)X_{k+1}(I + h P_k) := Y,
\end{array}
\end{equation}
for $k=1,2,\ldots$.
Note that the second matrix equation in \eqref{eq:scheme_1} is linear in the unknown matrix $X_{k+1}$. 
It is straightforward to check that $X_{k+1}=S(X_k)$, for $S_h(X)=(I - h\LA X)^{-1}Y(I + h\LA X)^{-1}$.
Theorem \ref{thm:existence_uniqueness} gives existence and uniqueness of a solution $\overline{X}$ for $h$ sufficiently small for the equation \eqref{eq:matrix_cubic}.
Hence, since $S_h$ is analytic in the set $\lbrace (h,X)\vert\|h\LA X\|<1 \rbrace$, we can conclude that the fixed point iteration $X_{k+1}=S_h(X_k)$ converges to $\overline{X}$, when $X_0$ is taken in a closed neighbourhood of $\overline{X}$ in which $S_h$ is a contraction.

When $P\in\SU(N)$, we have $(I - hP)^*=(I + h P)$.
Hence, it is enough to calculate the $LU$-factorization for $(I - hP)$ to have the one for $(I + h P)$.
The resulting Algorithm 1 is given below. We observe that the  cost per iteration is ${\mathcal O}(N^3)$.

\begin{algorithm}
\caption{Linear scheme in $\SU(N)$}\label{alg:cap}
\begin{algorithmic}
\Require $Y\in\SU(N)$; $\LA:\SU(N)\rightarrow\SU(N)$
\State $tol\gets 10^{-10}$
\State $err \gets 1$
\State $X_0 \gets Y$
\While{$err > tol$}
	\State $P \gets\LA X_0$
    \State $[L,U] = lu\textunderscore factorization(I-hP)$
    \State $X_1 \gets U^{-1}L^{-1} Y(L^{-1})^*(U^{-1})^*$
    \State $err \gets \|X_1-X_0\|$
    \State $X_0 \gets X_1$
\EndWhile
\end{algorithmic}
\end{algorithm}

\subsection{Quadratic scheme}\label{sec:quad_scheme}
Similarly, we can consider the same decomposition of the previous section for the equation \eqref{eq:matrix_cubic}, 
but reversing the roles of the known and unknown variables.
This splitting induces the following scheme:
\begin{equation}\label{eq:quad_scheme}
\begin{array}{rr}
(I - h P_k)X_k(I + h P_k) &:= Y\\
\LA X_{k+1} &:= P_k,
\end{array}
\end{equation}
where the unknown in the first equation is $P_k$.
The first quadratic equation can be put in the form
\begin{equation}\label{eq:CAREeqn}
h^2PXP + h[P,X] + Y - X = 0,
\end{equation}
which is a type of CARE.

Consider the case when equation \eqref{eq:CAREeqn} is posed in $\SU(N)$.
Then we have two main issues concerning its solvability.
On the one hand, defining $Z:=(I - h P)$, we see that the first equation can be written as:
\begin{equation}\label{eq:quad_eqn}
ZXZ^*=Y.
\end{equation}
Therefore, in order \eqref{eq:quad_eqn} to have solution, $X$ and $Y$ must be congruent.
Hence, $X_0$ must be defined via some congruence transformation of $Y$.
On the other hand, the following proposition shows that \eqref{eq:quad_eqn} admits infinitely many solutions.
Let $I_{p, N-p}$ denote the diagonal matrix with the first $p$ entries equal to 1 and the remaining $N-p$ entries equal to $-1$, where $0\le p \le N$. We denote by $U(p, N-p)$ the Lie group of matrices that leave the bilinear form $b(x,y) = x^* I_{p,N-p}y$ invariant.
\begin{proposition}
Let $A,B\in\SU(N)$ be non-singular, with signature matrix equal to $I_{p,N-p}$. 
Then, the equation
\begin{equation}\label{eq:quad_eqn2}
ZAZ^*=B
\end{equation}
has solution $Z\in GL(N,\Cc)$ (the Lie group of invertible complex matrices) if and only if $Z=C U D$, for some $U\in U(p,N-p)$ and $C,D$ non-singular such that $B = C I_{p,N-p} C^*$ and $DA D^*=  I_{p,N-p}$.  
\end{proposition}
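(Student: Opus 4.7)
The plan is to reduce the congruence equation $ZAZ^* = B$ to the canonical form $W I_{p,N-p} W^* = I_{p,N-p}$ by normalizing $A$ and $B$ separately to the signature matrix. Once the reduction is in place, $U(p,N-p)$ appears naturally as the stabilizer of $I_{p,N-p}$ under congruence, and the freedom in choosing the decomposition $Z = CUD$ is explained.

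For the ``if'' direction, given $Z = CUD$ satisfying the stated conditions, a one-line substitution yields
\[
ZAZ^* = CU(DAD^*)U^*C^* = CU\,I_{p,N-p}\,U^*C^* = C\,I_{p,N-p}\,C^* = B,
\]
using that $U \in U(p,N-p)$ is equivalent to $U\,I_{p,N-p}\,U^* = I_{p,N-p}$.

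For the ``only if'' direction, I would first invoke Sylvester's law of inertia: the hypothesis that $A$ and $B$ share signature $I_{p,N-p}$ supplies non-singular matrices $C,D$ with $B = C I_{p,N-p} C^*$ and $D A D^* = I_{p,N-p}$, chosen independently of any particular solution. Given any $Z \in GL(N,\Cc)$ with $ZAZ^* = B$, I then set $U := C^{-1} Z D^{-1}$ and verify membership in $U(p,N-p)$ by
\[
U\,I_{p,N-p}\,U^* = C^{-1} Z\bigl(D^{-1} I_{p,N-p} (D^*)^{-1}\bigr) Z^* (C^*)^{-1} = C^{-1}(ZAZ^*)(C^*)^{-1} = C^{-1} B (C^*)^{-1} = I_{p,N-p},
\]
where the middle equality uses $A = D^{-1} I_{p,N-p} (D^*)^{-1}$ from our choice of $D$. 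This gives the desired factorization $Z = CUD$.

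The main technical point is not an obstacle so much as a bookkeeping lemma: the relation $U\,I_{p,N-p}\,U^* = I_{p,N-p}$ emerging from the calculation must be shown equivalent to the defining relation $U^* I_{p,N-p} U = I_{p,N-p}$ used in the paper. Since $I_{p,N-p}^2 = I$, the first rearranges to $U^{-1} = I_{p,N-p}\,U^* I_{p,N-p}$, and identical manipulation of the second yields the same identity, so both conditions cut out the same Lie group. A secondary subtlety, since elements of $\SU(N)$ have purely imaginary spectrum, is interpreting the phrase ``signature matrix equal to $I_{p,N-p}$'': one should apply Sylvester's law to the Hermitian matrices $iA$ and $iB$ and then absorb the resulting scalar factors of $i$ into the non-singular normalizers $C$ and $D$ without otherwise altering the argument.
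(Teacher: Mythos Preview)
Your proposal is correct and follows essentially the same route as the paper: both fix normalizers $C,D$ reducing $A,B$ to $I_{p,N-p}$, set $U=C^{-1}ZD^{-1}$, and verify $U\in U(p,N-p)$ via the congruence identity, with the converse a direct substitution. Your extra remarks---invoking Sylvester's law to guarantee the existence of $C,D$, checking the equivalence of $U I_{p,N-p}U^*=I_{p,N-p}$ with the defining relation $U^* I_{p,N-p}U=I_{p,N-p}$, and interpreting ``signature'' for skew-Hermitian $A,B$ via $iA,iB$---address small gaps the paper leaves implicit but do not alter the argument.
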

\proof
Let $A,B,C,D$ be as in the hypotheses.
Then, we can rewrite \eqref{eq:quad_eqn2} as
\[
C^{-1}ZD^{-1} I_{p,N-p} (C^{-1}ZD^{-1})^* = I_{p,N-p}.
\]
Therefore, $C^{-1}ZD^{-1}$ is an element of $U(p,N-p)$. 
On the other hand, for any $U\in U(p,N-p)$, we have that $Z = C U D$ is a solution of \eqref{eq:quad_eqn2}.
Hence, all the solutions of \eqref{eq:quad_eqn2} have this form and are parametrized by $U\in U(p,N-p)$.
\endproof
In our particular situation, we are interested in solutions of the form $Z=I + P$, for $P$ skew-Hermitian. 
For instance, taking $A,B\in\SU(N)$ diagonal such that $\mbox{i}(A-B)\geq 0$ and $iA<0$, any $P$ diagonal skew-hermitian (i.e. purely imaginary) such that $P^2=BA^{-1}-I$ is a solution. 
Hence, for generic $A,B$ as above, we get $2^N$ solutions, making the iteration \eqref{eq:quad_scheme} not well-defined.

We can see that the Riccati equation \eqref{eq:CAREeqn} has a non-uniqueness issue also in the following way. 
The equation can be split into two orthogonal components, one parallel to $X$ and one orthogonal to it with respect to the Frobenius inner product:
\begin{equation}
\begin{array}{rr}
h^2\Pi_X(PXP) + \Pi_X Y - X &= 0\\
h^2\Pi_X^\perp(PXP) + h[P,X] + \Pi_X^\perp Y &= 0,
\end{array}
\end{equation}
where $\Pi_X$ is the orthogonal projection onto $\stab_X:=\lbrace A\in\Mm(N,\Cc)\mbox{ s.t. } [A,X]=0\rbrace$.
If we write $P_\parallel:=\Pi_X P$ and $P_\perp:=\Pi^\perp_X P$, we get:
\begin{equation}
\begin{array}{rr}
h^2P_\parallel XP_\parallel + h^2\Pi_X(P_\perp XP_\perp) + \Pi_X Y - X &= 0\\
h^2\Pi_X^\perp(P_\perp X P_\perp) + h^2 P_\perp X P_\parallel + h^2 P_\parallel X P_\perp + h[P_\perp,X] + \Pi_X^\perp Y &= 0.
\end{array}
\end{equation}
If these equations have a solution $(P_\parallel,P_\perp)$, then we also have a solution $(-P_\parallel,P'_\perp)$, where $P'_\perp=P_\perp+\mathcal{O}(h^2)$.
In $\SU(2)\cong\Rr^3$ this is easily seen, since the above scheme reads \citep{Viv2019}:
\begin{equation}
\begin{array}{rr}
h^2p_\parallel (x\cdot p_\parallel) + \Pi_x y - x &= 0\\
h^2p_\perp (x\cdot p_\parallel) + hp_\perp\times x + \Pi_x^\perp y &= 0.
\end{array}
\end{equation}
where $\times$ denotes the vector product and  the matrices in $\SU(2)$ have been represented as vectors in $\Rr^3$, via the standard isomorphism.
Hence, we have the solutions:
\begin{equation}
\begin{array}{rr}
p_\parallel =& \pm\sqrt{\dfrac{\|x\| - \|\Pi_x y\|}{h^2\|x\|}}\dfrac{x}{\|x\|}\\
Rp_\perp =&-\Pi_x^\perp y,
\end{array}
\end{equation}
where $R\in\Mm(3,\Rr)$ is such that $Rp_\perp = h^2p_\perp (x\cdot p_\parallel) + hp_\perp\times x$.
Hence, the ambiguity of the sign in $p_\parallel$ causes a non-uniqueness of solution.

\subsection{Cubic scheme}\label{sec:cubic_scheme}
Newton's method can be directly applied to solve \eqref{eq:matrix_cubic}.
A practical implementation is obtained rewriting \eqref{eq:matrix_cubic} in the following way:
\begin{equation}\label{eq:matrix_cubic2}
F(X)=X - h[\LA X,X] - h^2(\LA X)X(\LA X) - Y=0.
\end{equation}
The Jacobian of $F$ applied to a matrix $Z$ is given by:
\begin{equation}\label{eq:matrix_cubic_jacobian}
\begin{array}{ll}
DF(X)[Z]=&Z - h([\LA Z,X]+[\LA X,Z])\\
& - h^2((\LA Z))X(\LA X)+(\LA X)Z(\LA X)+(\LA X)X(\LA Z)).
\end{array}
\end{equation}
\begin{remark}
Here we consider an inexact Newton approach.
Hence, in order to apply the Newton's method, we consider some approximation for $DF(X)^{-1}$.
We notice that $DF(X)=I - h(\mathcal{B}_1 + h\mathcal{B}_2) $, for $\mathcal{B}_1=[\LA \cdot,X]+[\LA X,\cdot]$ and $\mathcal{B}_2=(\LA \cdot))X(\LA X)+(\LA X)\cdot(\LA X)+(\LA X)X(\LA \cdot)$.
Hence, we get the following third order approximation of $DF(X)^{-1}$
\[
DF(X)^{-1}=I + h\mathcal{B}_1 + h^2(\mathcal{B}_1^2	+ \mathcal{B}_2) + \mathcal{O}(h^3).
\]
At least four reasonable approximations of $DF(X)^{-1}$ can be chosen:
\begin{enumerate}
\item $DF(X)^{-1}\approx I + h\mathcal{B}_1$
\item $DF(X)^{-1}\approx I + h\mathcal{B}_1 + h^2\mathcal{B}_2$
\item $DF(X)^{-1}\approx I + h\mathcal{B}_1 + h^2\mathcal{B}_1^2$
\item $DF(X)^{-1}\approx I + h\mathcal{B}_1 + h^2(\mathcal{B}_1^2+\mathcal{B}_2)$
\end{enumerate}
We have found out that, among those, the second one in general performs better.
Indeed, the first one might have convergence issues for large $h$, and even for large matrices the performances are at most comparable to the second one.
The third one and the fourth one do not perform better than the second one, because the norm of $\mathcal{B}_1^2$ is in general much smaller than the one of $\mathcal{B}_2$.
Hence, the forth one is computationally more expensive than the second one, without any gain in convergence, whereas the third one has a slower convergence than the second one.
\end{remark}
Then, we consider the following approximation for the inverse of the Jacobian evaluated in $F(X)$:
\begin{equation}\label{eq:matrix_cubic_jacobian_approx}
\begin{array}{ll}
DF(X)^{-1}[F(X)]&\approx  \widetilde{DF}(X)[F(X)]\\
&:=F(X) + h([\LA F(X),X]+[\LA X,F(X)])\\
&+ h^2((\LA F(X))X(\LA X)+(\LA X)F(X)(\LA X)+(\LA X)X(\LA F(X))).
\end{array}
\end{equation}
This approximation leads to the inexact Newton scheme (Algorithm 2) described below.
	
\begin{algorithm}
\caption{Inexact Newton scheme}\label{alg:cap}
\begin{algorithmic}
\Require $Y\in\SU(N)$; $\LA:\SU(N)\rightarrow\SU(N)$
\State $tol\gets 10^{-10}$
\State $err \gets 1$
\State $X_0 \gets Y$
\While{$err > tol$}
    \State $X_1 \gets X_0 - \widetilde{DF}(X_0)[F(X_0)]$
    \State $err \gets \|X_1-X_0\|$
    \State $X_0 \gets X_1$
\EndWhile
\end{algorithmic}
\end{algorithm}

We observe that the Inexact Newton method above has its main computational cost in the evaluation of the approximated Jacobian \eqref{eq:matrix_cubic_jacobian}, due to the several matrix-matrix multiplications required.
Hence, for large $N$, the lower complexity of the scheme defined in Section~\ref{sec:lin_sch} makes it more advantageous than the Inexact Newton's one, 
in terms of cost per iteration.
 	
\section{Numerical experiments}\label{sec:Num_exp}
In this section, we test our algorithms on three concrete examples arising from the numerical solution of spatially semi-discretized conservative PDEs:
the incompressible Euler equations, the Drift-Alfv\'en plasma model, and the Heisenberg spin chain.
To integrate in time the equations of motion, we apply the numerical scheme \eqref{eq:LiePoisson_int}.
For each equation, we test the performances of the schemes defined in sections~\ref{sec:lin_sch} and \ref{sec:cubic_scheme}.
Here we briefly summarize our findings.
For large time-step $h$, the scheme of section~\ref{sec:lin_sch} is more efficient when solving \eqref{eq:matrix_cubic} for large matrices. 
This makes the linear scheme more suitable for solving the Euler equations or the Drift-Alfv\'en plasma model.
Analogously, we observe that for spin-systems, for large time-step and many particles, the scheme of section~\ref{sec:lin_sch} is faster and has better convergence properties than the one of section~\ref{sec:cubic_scheme}.

We observe that for both the Euler equations and Drift-Alfv\'en plasma model, the number of iterations tends to decrease with increasing $N$.
This is due to the fact that we are normalizing the initial values of the vorticity and the fact that we are absorbing into the time-step the factor $N^{3/2}$ which should multiply the matrix bracket in order to have spatial convergence of the right-hand side of equations \eqref{eq:Euler_equation_vort_quant} and \eqref{eq:Drift_Alvfen_vort_quant} (see \cite{MoVi2020}).
Indeed, the same phenomenon is not observed for the Heisenberg spin chain, where the spatial discretization is kept fixed while the number of particles is increased.

Another observation is that both in the Euler equations and in the Drift-Alfv\'en plasma model, the small number of Newton iterations for large $N$ prevents any benefit from combining in series two different algorithms, i.e., using a few steps of a fixed point iteration to get a good initial guess for the inexact Newton scheme.
Analogously, we have not observed any improvement in the convergence speed using a mixed scheme for the Heisenberg spin chain.

The simulations are run in Matlab2020a on a Dell laptop, processor Intel(R) Core(TM) i7-1065G7 CPU @ 1.30GHz, RAM 16.0 GB.
For each simulation, a tolerance of $10^{-10}$ has been used as stopping criterion of the iteration.
The results in the tables have been obtained as the average of 10 runs of the respective algorithm for solving equation \eqref{eq:matrix_cubic}, each run with respect to a different randomly generated $Y$.
The CPU time is measured in seconds.

\subsection{2D Euler equations}
The 2D Euler equations on a compact surface $S\subset\Rr^3$ can be expressed in the vorticity formulation as:
\begin{equation}\label{eq:Euler_equation_vort}
\begin{array}{ll}
\dot{\omega}&=\lbrace\psi,\omega\rbrace\\
\Delta\psi &= \omega,
\end{array}
\end{equation}
where $\omega$ is the vorticity field, $\psi$ is the stream function, the curly brackets denote the Poisson brackets, and $\Delta$ is the Laplace-Beltrami operator on $S$.
Let us fix $S=\Ss^2$, the 2-sphere.
Equations \eqref{eq:Euler_equation_vort} admit a spatial discretization called \textit{consistent truncation} (see \cite{Zei1991,Zei2004}), which takes the form:

\begin{equation}\label{eq:Euler_equation_vort_quant}
\begin{array}{ll}
\dot{W}&=[P,W]\\
\Delta_N P &= W,
\end{array}
\end{equation}
where $P,W\in\SU(N)$, for $N=1,2,\ldots$ and for a suitable operator $\Delta_N:\SU(N)\rightarrow\SU(N)$.
Since $\Delta_N$ can be chosen to be invertible, we get that equations \eqref{eq:Euler_equation_vort_quant} are of the form \eqref{eq:isospectral}, with $\LA W=\Delta_N^{-1}W$.
Equations \eqref{eq:Euler_equation_vort_quant} are also a Lie--Poisson system, hence the scheme \eqref{eq:LiePoisson_int} is well-suited to retain its qualitative properties \cite{hlw}.
Clearly, in order to get a good approximation of the equations \eqref{eq:Euler_equation_vort}, we have to take $N$ large (at least around $10^3$, see \cite{MoVi2020}).

In Table~\ref{tab:EE_tab} we show the performances of the different schemes proposed in the previous section.
We notice that for large $N$ the linear scheme performs somewhat better than the inexact Newton one in terms of CPU time. 
\begin{table}[h!]
\begin{tabular}
{|p{2cm}||p{.8cm}|p{1.5cm}||p{.8cm}|p{1.5cm}||p{.8cm}|p{1.5cm}|} \hline & \multicolumn{2}{|c|}{Explicit fixed point}
& \multicolumn{2}{|c|}{Linear scheme} & \multicolumn{2}{|c|}{Newton} \\
 \hline
$N$ & $Iter$ & $CPU time$ & $Iter$ & $CPU time$ & $Iter$ & $CPU time$\\
 \hline
3 & 12 & 0.0008 & 11 & 0.0009 & 6.9 & 0.0020\\
5 & 9.5 & 0.0006 & 8.9 & 0.0005 & 5.5 & 0.0014\\
9 & 9.9 & 0.0006 & 8.3 & 0.0008 & 5.8 & 0.0016\\
17 & 11 & 0.0010 & 8.7 & 0.0021 & 6.1 & 0.0025\\
33 & 7.3 & 0.0022 & 6.5 & 0.0027 & 4.4 & 0.0037\\
65 & 7.8 & 0.0058 & 6.8 & 0.0053 & 4.8 & 0.0090\\
129 & 6.2 & 0.0125 & 5.6 & 0.0151 & 3.9 & 0.0229\\
257 & 7.2 & 0.0447 & 6.4 & 0.0571 & 4.4 & 0.0792\\
513 & 6.3 & 0.2020 & 5.8 & 0.2613 & 4.1 & 0.3836\\
1025 & 7.3 & 1.6191 & 6.3 & 1.9520 & 4.4 & 3.3740
\\
 \hline
\end{tabular}
\caption{Solution of the Euler equation on the sphere. CPU time and number of iterations of the two proposed schemes, for time-step $h=0.5$ and normalized randomly generated initial values.}\label{tab:EE_tab}
\end{table}

\subsection{Drift-Alfv\'en plasma model}
The Drift-Alfv\'en plasma model \cite{MenBerKamSch2012} can be formulated in terms of the so called generalized vorticities $\omega_\pm,\omega_0$. 
Although these are not directly physical quantities, they are a linear combinations of the generalized parallel momentum and the plasma density.
In particular, neglecting the third order non-linearities, and absorbing the parameters in the time variable, we get the following equations:
\begin{equation}\label{eq:Drift_Alvfen_vort}
\begin{array}{ll}
&\dot{\omega}_\pm=\lbrace\Phi_\pm,\omega_\pm\rbrace\\
&\dot{\omega}_0=\lbrace\Phi,\omega_0\rbrace\\
\\
&\Phi_\pm = \Phi \pm \frac{1}{\lambda}\Psi\\
&\Delta \Phi = \omega_+ + \omega_- + \omega_0\\
&\Delta \Psi - \frac{1}{\lambda^2}\Psi =  \frac{1}{\lambda}\omega_+ - \frac{1}{\lambda}\omega_-,
\end{array}
\end{equation}
where $\lambda$ is the ratio between the electron inertial
skin depth and the ion sound gyroradius \cite{MenBerKamSch2012}. 
Analogously to the Euler equations, equations \eqref{eq:Drift_Alvfen_vort} have a matrix representation in $\SU(N)\times\SU(N)\times\SU(N)$, for any $N\geq 1$.
If we assume $\omega_0=0$ (which physically means excluding the electrostatic  drift vortices), we get:
\begin{equation}\label{eq:Drift_Alvfen_vort_w00}
\begin{array}{ll}
&\dot{\omega}_\pm=\lbrace\Phi_\pm,\omega_\pm\rbrace\\
\\
&\Phi_\pm = \Phi \pm \frac{1}{\lambda}\Psi\\
&\Delta \Phi = \omega_+ + \omega_-\\
&\Delta \Psi - \frac{1}{\lambda^2}\Psi =  \frac{1}{\lambda}\omega_+ - \frac{1}{\lambda}\omega_-.
\end{array}
\end{equation}
With the same notation of the previous section, we have the matrix equations:
\begin{equation}\label{eq:Drift_Alvfen_vort_quant}
\begin{array}{ll}
&\dot{W}_\pm=[F_\pm,W_\pm]\\
\\
&F_\pm = F \pm \frac{1}{\lambda}P\\
&\Delta_N F = W_+ + W_-\\
&\Delta_N P - \frac{1}{\lambda^2}P =  \frac{1}{\lambda}W_+ - \frac{1}{\lambda}W_-.
\end{array}
\end{equation}
Equations \eqref{eq:Drift_Alvfen_vort_quant} can be cast in the form \eqref{eq:isospectral} in $\SU(N)\oplus\SU(N)$, for $\LA:\SU(N)\oplus\SU(N)\rightarrow\SU(N)\oplus\SU(N)$, defined by
\[\mathcal{L}(W_+,W_-)=(\Delta_N^{-1}(W_+ + W_-),\frac{1}{\lambda}(\Delta_N - \frac{1}{\lambda^2})^{-1}(W_+ - W_-)).\]
In Table~\ref{tab:DA_tab}, the performances of the three algorithms applied component-wise for the Drift-Alfv\'en model are reported.
Analogously to the Euler equations, the linear scheme performs a bit better than the inexact Newton scheme, especially for large matrices.
\begin{table}[h!]
\begin{tabular}
{|p{2cm}||p{.8cm}|p{1.5cm}||p{.8cm}|p{1.5cm}||p{.8cm}|p{1.5cm}|} \hline & \multicolumn{2}{|c|}{One-step fixed point}
& \multicolumn{2}{|c|}{Two-step fixed point} & \multicolumn{2}{|c|}{Newton} \\
 \hline
$N$ & $Iter$ & $CPU time$ & $Iter$ & $CPU time$ & $Iter$ & $CPU time$\\
\hline
3 & 12 & 0.0013 & 11 & 0.0014 & 6.8 & 0.0024\\
5 & 9.6 & 0.0017 & 8.6 & 0.0020 & 5.6 & 0.0042\\
9 &	11 & 0.0023 & 9.2 & 0.0036 & 6.8 & 0.0045\\
17 & 8.4 & 0.0035 & 7.7 & 0.0040 & 5.1 & 0.0055\\
33 & 6 & 0.0055 & 5.6 & 0.0055 & 3.7 & 0.0081\\
65 & 6.3 & 0.0121 & 5.5 & 0.0157 & 3.9 & 0.0223\\
129 & 6.4 & 0.0502 & 5.9 & 0.0714 & 4 & 0.1019\\
257 & 5.6 & 0.2388 & 5.2 & 0.3207 & 3.7 & 0.4853\\
513 & 5.8 & 1.3479 & 5.2 & 1.6521 & 3.6 & 2.6889\\
1025 & 7 & 10.3985 & 6.1 & 12.2737 & 4.3 & 21.5882\\
 \hline
\end{tabular}
\caption{Solution of the Drift-Alfv\'en model. 
CPU time and number of iterations of the two proposed schemes, for time-step $h=0.5$, $\lambda=5$, and normalized randomly generated initial values.}\label{tab:DA_tab}
\end{table}

\subsection{Heisenberg spin chain}
The Heisenberg spin chain is a conservative model of spin particle dynamics.
This model arises from the spatial discretization of the Landau–Lifshitz–Gilbert Hamiltonian PDE \cite{Lak2011}:
\begin{equation}\label{eq:LL11}
\partial_t\sigma=\sigma\times\partial_{xx}\sigma,
\end{equation}
where $\sigma: \Rr\times \Ss^1\rightarrow\Ss^2$ is a closed smooth curve.
Each value taken by $\sigma$ in $\Ss^2$ represents a spin of an infinitesimal particle.
We notice that unlike the previous examples of hyperbolic PDEs, equation \eqref{eq:LL11} is a parabolic PDE.
In Tables~\ref{tab:HSC_tab} and \ref{tab:HSC_tab2}, we see that this requires a smaller time-step in order to have a comparable number of iterations of the two schemes as for the previous two examples.

Discretizing $\sigma\approx\lbrace s_i\rbrace_{i=1}^{N+1}$ on an evenly spaced grid with step size $\Delta x$ of $\Ss^1$, 
$\lbrace x_i\rbrace_{i=1}^{N+1}$, with the conditions $s_{N+1}=s_1$ and $x_{N+1}=x_1$, we obtain
\[\partial_{xx}\sigma(x_i)\approx \dfrac{s_{i-1} - 2s_i + s_{i+1}}{\Delta x^2}.\]
Each spin vector $s_i$ can be represented by a matrix $S_i$ with unitary norm in $\SU(2)\cong \Rr^3$.
The equations of motion are given by:
\[
\partial_t S_i=\left[S_i,\dfrac{S_{i-1} + S_{i+1}}{\Delta x^2}\right],
\] 
for $i=1,\ldots N$.
Hence, each spin interacts only with its neighbours (which explains the chain name).
Hence, the matrices involved remain very sparse. 
A chain of $N$ particles is an Hamiltonian system in $\SU(2)^N$, with Hamiltonian given by:
\[
H(S_1,\ldots,S_N) = \dfrac{1}{\Delta x^2}\sum_{i=1}^{N} \Tr(S_i^* S_{i+1}).
\]
The operator $\LA: \SU(2)^N\rightarrow \SU(2)^N$ is defined by
\[
\LA(S_1,S_2,\ldots,S_N) = (S_N + S_2,S_1 + S_3,\ldots,S_{N-1}+S_1).
\]
In Tables~\ref{tab:HSC_tab} and \ref{tab:HSC_tab2}, the results for the three algorithms applied component-wise are reported.
In the numerical simulations, we set $\Delta x=1$.
We observe that both the explicit fixed point scheme and the inexact Newton method do not converge for spin-systems with many particles and large time-step $h=0.5$.
On the other hand, the linear scheme does converge for any $N\leq 2^{10}+1$, making it more suitable for long time simulations.
For smaller time-step $h=0.1$ the three algorithms perform almost equally well.

\begin{table}[h!]
\begin{tabular}
{|p{2cm}||p{.8cm}|p{1.5cm}||p{.8cm}|p{1.5cm}||p{.8cm}|p{1.5cm}|} \hline & \multicolumn{2}{|c|}{Explicit fixed point}
& \multicolumn{2}{|c|}{Linear scheme} & \multicolumn{2}{|c|}{Newton} \\
\hline
$N$ & $Iter$ & $CPU time$ & $Iter$ & $CPU time$ & $Iter$ & $CPU time$\\
\hline
3 & 62 & 0.0022 & 24 & 0.0013 & 31 & 0.0041\\
5 & 151 & 0.0039 & 23 & 0.0011 & 71 & 0.0071\\
9 &NC & NC & 24 & 0.0014 & NC & NC\\
17 &NC & NC & 26 & 0.0018 & NC & NC\\
33 &NC & NC& 28 & 0.0028 & NC & NC\\
65 &NC & NC& 31 & 0.0047 & NC & NC\\
129 &NC & NC& 30 & 0.0063 & NC & NC\\
257 &NC & NC& 31 & 0.0121 & NC & NC\\
513 &NC & NC& 32 & 0.0210 & NC & NC\\
1025 &NC & NC& 33 & 0.0386 & NC & NC\\
 \hline
\end{tabular}
\caption{Solution of Heisenberg spin chain model.
CPU time and number of iterations of the two proposed schemes, for time-step $h=0.5$ and normalized randomly generated initial values.}
\label{tab:HSC_tab}
\end{table}

\begin{table}[h!]
\begin{tabular}
{|p{2cm}||p{.8cm}|p{1.5cm}||p{.8cm}|p{1.5cm}||p{.8cm}|p{1.5cm}|} \hline & \multicolumn{2}{|c|}{One-step fixed point}
& \multicolumn{2}{|c|}{Two-step fixed point} & \multicolumn{2}{|c|}{Newton} \\
\hline
$N$ & $Iter$ & $CPU time$ & $Iter$ & $CPU time$ & $Iter$ & $CPU time$\\
 \hline
3 & 10 & 0.0008 & 9.2 & 0.0007 & 6.1 & 0.0014\\
5 & 13 & 0.0009 & 9.7 & 0.0008 & 7.3 & 0.0016\\
9 & 14 & 0.0011 & 9.9 & 0.0010 & 7.7 & 0.0016\\
17 & 14 & 0.0010 & 10 & 0.0013 & 7.6 & 0.0018\\
33 & 14 & 0.0011 & 10 & 0.0018 & 8.1 & 0.0021\\
65 & 15 & 0.0014 & 10 & 0.0019 & 8 & 0.0023\\
129 & 15 & 0.0019 & 10 & 0.0025 & 8 & 0.0031\\
257 & 15 & 0.0025 & 10 & 0.0042 & 8 & 0.0039\\
513 & 15 & 0.0046 & 11 & 0.0076 & 8.1 & 0.0058\\
1025 & 15 & 0.0093 & 11 & 0.0148 & 8.1 & 0.0136\\
 \hline
\end{tabular}
\caption{Solution of Heisenberg spin chain model.
CPU time and number of iterations of the two proposed schemes, for time-step $h=0.1$ and normalized randomly generated initial values.}\label{tab:HSC_tab2}
\end{table}

\section{Conclusions}
In this paper we have proposed and investigated some iterative schemes for the solution of cubic matrix equations
arising in the numerical solution of certain conservative PDEs by means of (Lie--Poisson) geometrical integrators. These types of
schemes enable the preservation of important physical features of the original infinite-dimensional flows, which is generally not the case
when more standard discretizations and numerical integrators are used. 
Both the fixed point iterations and the inexact Newton type scheme we have investigated tend to work well, but we found that the fixed point method which requires the solution of a linear matrix equation is the most robust with respect to the time step and requires a comparable CPU time with the fully explicit scheme.

\section*{Acknowledgements}
The authors would like to thank the anonymous referees for their useful comments and suggestions.
A special thank to prof. Bruno Iannazzo for his observations on the manuscript and for pointing out reference \cite{BaMeNeVaD2020}.


\bibliographystyle{plainnat}
\bibliography{sn-biblio}

\begin{thebibliography}{12}
\providecommand{\natexlab}[1]{#1}
\providecommand{\url}[1]{\texttt{#1}}
\expandafter\ifx\csname urlstyle\endcsname\relax
  \providecommand{\doi}[1]{doi: #1}\else
  \providecommand{\doi}{doi: \begingroup \urlstyle{rm}\Url}\fi

\bibitem[Bankmann et~al.(2020)Bankmann, Mehrmann, Nesterov, and
  Van~Dooren]{BaMeNeVaD2020}
D.~Bankmann, V.~Mehrmann, Y.~Nesterov, and P.~Van~Dooren.
\newblock Computation of the analytic center of the solution set of the linear
  matrix inequality arising in continuous- and discrete-time passivity
  analysis.
\newblock \emph{Vietnam J. Math.}, 48:\penalty0 633--659, 2020.

\bibitem[Bini et~al.(2012)Bini, Iannazzo, and Meini]{BinIanMei2012}
D.~Bini, B.~Iannazzo, and B.~Meini.
\newblock \emph{Numerical Solution of Algebraic Riccati Equations}.
\newblock SIAM, Philadelphia, 2012.

\bibitem[Gohberg et~al.(2006)Gohberg, Lancaster, and Rodman]{GohLanRod2006}
I.~Gohberg, P.~Lancaster, and L.~Rodman.
\newblock \emph{Invariant Subspaces of Matrices with Applications}.
\newblock SIAM, Philadelphia, 2006.

\bibitem[Hairer et~al.(2006)Hairer, Lubich, and Wanner]{hlw}
E.~Hairer, C.~Lubich, and G.~Wanner.
\newblock \emph{Geometric Numerical Integration}.
\newblock Springer, Berlin, Heidelberg, 2006.

\bibitem[Lakshmanan(2011)]{Lak2011}
M.~Lakshmanan.
\newblock The fascinating world of the {Landau-Lifshitz-Gilbert} equation: an
  overview.
\newblock \emph{Philosophical Transactions of the Royal Society A:
  Mathematical, Physical and Engineering Sciences}, 369:\penalty0 1280–1300,
  2011.

\bibitem[Mentink et~al.(2005)Mentink, Bergmans, Kamp, and
  Schep]{MenBerKamSch2012}
J.~H. Mentink, J.~Bergmans, L.~P.~J. Kamp, and T.~J. Schep.
\newblock Dynamics of plasma vortices: The role of the electron skin depth.
\newblock \emph{Physics of Plasmas}, 12(5):\penalty0 1--14, 2005.

\bibitem[Modin and Viviani(2020)]{MoVi2020}
K.~Modin and M.~Viviani.
\newblock A {C}asimir preserving scheme for long-time simulation of spherical
  ideal hydrodynamics.
\newblock \emph{J. Fluid Mech.}, 884:\penalty0 A22, 2020.

\bibitem[Simoncini(2016)]{Sim2016}
Valeria Simoncini.
\newblock {Computational Methods for Linear Matrix Equations}.
\newblock \emph{SIAM Review}, 58\penalty0 (3):\penalty0 377–441, 2016.

\bibitem[Sylvester(1884)]{Syl1884}
J.~Sylvester.
\newblock Sur l'equations en matrices $px=xq$.
\newblock \emph{C. R. Acad. Sci. Paris.}, 99 (2):\penalty0 67--71, 1884.

\bibitem[Viviani(2020)]{Viv2019}
M.~Viviani.
\newblock A minimal-variable symplectic method for isospectral flows.
\newblock \emph{BIT Num. Math.}, 60:\penalty0 741--758, 2020.

\bibitem[Zeitlin(1991)]{Zei1991}
V.~Zeitlin.
\newblock {Finite-mode analogues of 2D ideal hydrodynamics: Coadjoint orbits
  and local canonical structure}.
\newblock \emph{Physica D}, 49\penalty0 (3):\penalty0 353--362, 1991.

\bibitem[Zeitlin(2004)]{Zei2004}
V.~Zeitlin.
\newblock {Self-Consistent-Mode Approximation for the Hydrodynamics of an
  Incompressible Fluid on Non rotating and Rotating Spheres}.
\newblock \emph{Physical Review Letters}, 93\penalty0 (26):\penalty0 353--362,
  2004.

\end{thebibliography}


\end{document}